\DeclareMathOperator{\im}{im}
\begin{document}
\def\soC{{\mathfrak{so}}}
\def\su{{\mathfrak{su}}}
\def\gotg{{\mathfrak{g}}}
\def\goth{{\mathfrak{h}}}
\def\gotm{{\mathfrak{m}}}
\def\BIp{\beta}
\def\Hp{\gamma}
\def\Frac[#1/#2]{\frac{#1}{#2}}
\def\R{\mathbb{R}}

\def\Cl{\hbox{Cl}}
\def\End{\hbox{End}}
\def\Spin{\hbox{Spin}}
\def\spin{\mathfrak{spin}}
\def\SO{\hbox{SO}}
\def\GL{\hbox{GL}}
\def\SU{\hbox{SU}}
\def\Ad{\hbox{Ad}}
\def\diag{\hbox{diag}}
 \def\one{\mathbb{I}}

\title{Barbero--Immirzi connections and how to build them
}


\author{A.\ Orizzonte \and L.\ Fatibene 
}


\institute{A.\ Orizzonte \at
             Dipartimento di Matematica, University of Torino, via Carlo Alberto 10, 10123 Torino (Italy) \\
              \email{andrea.orizzonte@unito.it}           
          \and
            L.\ Fatibene \at
             Dipartimento di Matematica, University of Torino, via Carlo Alberto 10, 10123 Torino (Italy) \\
             Istituto Nazionale di Fisica Nucleare (INFN), Sezione di Torino, via P. Giuria 1, 10125 Torino, (Italy)
             \email{lorenzo.fatibene@unito.it}
}

\date{\today}

\maketitle

\begin{abstract}
We introduce a covariant formulation of Barbero--Immirzi connections, which are used in Loop Quantum Gravity to describe gravity. We show that Barbero--Immirzi connections can be uniquely defined out of a given spin connection for any $(n+1)$-dimensional lorentzian manifold which is spin. A remarkable result is that the presence of a real Barbero--Immirzi parameter is a feature unique to the $4$-dimensional case.

\keywords{Principal connections on bundles \and Loop quantum gravity 
}
\end{abstract}

\section{Introduction and Results}

The {Barbero--Immirzi connection} is used in Loop Quantum Gravity (LQG) to deal with gravity in lorentzian signature $(3,1)$. 

We briefly recall the original definition given by Barbero \cite{Barbero} and Immirzi \cite{Immirzi}. Fix a $4$-dimensional orientable lorentzian manifold $M$ with an Einstein metric $g$ and an embedded $3$-dimensional submanifold $S$ that is spacelike, so that the pull-back metric $h = t^* g$ along the embedding map $t \colon S \hookrightarrow M$ is positive definite.
Choose local coordinates $\left\{ s^A \right\}_{A = 1,2,3}$ on $S$ and a basis $\left\{ \rho_i \right\}_{i=1,2,3}$ for $\su(2)$. An $\su(2)$-valued {\it Barbero--Immirzi (BI) connection} is a $1$-form on $S$ with local coefficients
\begin{equation}
A^k_A(\BIp) = \Frac[1/2] \epsilon_{ij}{}^k \, \Gamma^{ij}_A + \BIp K^k_A, \quad \BIp \in \R
\label{biCoeff}
\end{equation}
where $\epsilon_{ij}{}^k$ is the totally antisymmetric Levi--Civita symbol, $\Gamma^{ij}_A$ are the Christoffel symbols of $h$ and $K^k_A$ are related to the coefficients of the Weingarten operator of $S$ (see \cite{KobaNu}, vol.\ 2, p.\ 14). The real number $\BIp$ is the {\it Barbero--Immirzi (BI) parameter} and for each choice of $\BIp$ one defines a different BI connection $A(\BIp) = A^k_A(\BIp) \ ds^A \otimes \rho_k$. In LQG the BI parameter has to be fixed by experimental data, so that one can speak of {\it the} BI connection $A$. 


There are several issues with the construction above. First, we want to use the BI connection to reformulate General Relativity, therefore we want to define $A$ without fixing an Einstein metric $g$ a priori. Second, the definition of the BI connection $A$ in terms of $\Gamma$ and $K$ looks arbitrary and there is no apparent explanation for the BI parameter $\BIp$. Third, the construction is carried out on a submanifold $S$ of $M$. As Samuel pointed out in \cite{Samuel} one would like to relate the BI connection $A$ to a connection defined on all of $M$.

The aim of this work is to give a geometrically well-defined method for building BI connections, thus solving all the issues outlined above. We will work in the general case of an $(n+1)$-dimensional lorentzian manifold $M$ and discuss the special case of interest $n = 3$. 

Ultimately, one is interested in reformulating Einstein's General Relativity as a variational theory for the BI connection $A$ instead of the metric $g$, which is a form more suitable for quantization and the starting point of LQG (see \cite{RovelliBook}). This is one of the main motivations behind the present work.

In Section 2 we review spin structures and spin frames, in this framework we can reformulate the problem without fixing a specific lorentzian metric $g$ on $M$.

In Section 3 we tackle the problem of reducing a principal $\Spin(n,1)$-bundle over $M$ to a $\Spin(n)$-bundle. The main idea is to define a $\Spin(n)$-connection $A$ out of a $\Spin(n,1)$-connection $\omega$. In general, reduction of a principal $G$-bundle over $M$ to a closed Lie subgroup $H \subset G$ is subject to topological obstructions. In the case of interest we prove that there are no obstructions, so that a reduction always exists.

In Section 4 we use the concept of reductive pair to explicitly define the BI connection $A$ on the $\Spin(n)$-bundle out of a connection $\omega$ on the $\Spin(n,1)$-bundle. We prove that $(\Spin(n,1), \Spin(n))$ is a reductive pair for $n \geq 3$ and that the existence of the BI parameter $\BIp$ is related to unique properties of the case $n = 3$. The remarkable result is that there is a {\it unique} way of defining the coefficients of $A$, which is precisely the one given in \eqref{biCoeff}.

\section{Spin Structures and Spin Frames}

Let $M$ be an $m$-dimensional real, smooth manifold and fix a signature $\eta=(r, s)$ on it, i.e.\ with $r+s=m$.
It is well known that all manifolds allow riemannian metrics, i.e.\ in signature $(m,0)$ (see \cite{KobaNu}, vol.\ 1, p.\ 116), whereas a pseudo-riemannian metric of signature $(r,s)$ exists if and only if the tangent bundle $TM$ splits as $TM=T_r\oplus T_s$ where $T_r, T_s$ are two subbundles of rank $r$ and $s$, respectively (see \cite{Steenrod}, p.\ 207).

It is also well known that whenever the manifold $M$ is orientable and admits a global metric of signature $(r,s)$, its frame bundle $L(M)$ admits a reduction to the group $\SO(r,s) \subset \GL(m)$.
Any such reduction is in fact associated to a metric $g$ of signature $(r,s)$, that is, any $\SO(r,s)$-reduction of the frame bundle $L(M)$ is given by a positive orthonormal frame bundle $SO(M, g)$ for some pseudo-riemannian metric $g$ on $M$ (see \cite{KobaNu}, vol.\ 1, p.\ 60).

A further topological obstruction is required for having spin structures on an orientable pseudo-riemannian manifold $(M, g)$. 
A {\it spin structure} (see, for instance, \cite{Lawson}, p.\ 80) is a pair $(P, \hat \ell)$, where $P$ is a principal bundle on $M$ with group $\Spin(r,s)$ (namely, the connected component of the identity in the spin group of the right signature) and $\hat\ell \colon P \to SO(M, g)$ is a principal bundle morphism with respect to the double cover $\ell \colon \Spin(r,s) \to \SO(r,s)$ which projects over the identity on $M$, i.e.\
\begin{equation}
\begindc{\commdiag}[10]
\obj(100,30)[M2]{$M$}
\obj(170,30)[M3]{$M$}
\obj(100,80)[P2]{$ P$}
\obj(170,80)[P3]{$SO(M,g)$}
%
\mor(100,29)(170,29){}[\atleft, \solidline] \mor(100,31)(170,31){}[\atleft, \solidline]
\mor{P2}{M2}{$p$}
\mor{P3}{M3}{$\pi$}
\mor{P2}{P3}{$\hat\ell$}
\enddc
\end{equation}
The obstruction to the existence of a spin structure over $(M, g)$ is the vanishing of the second Stiefel--Whitney class of $M$; see \cite{Antonsen}.

There is also a more general and somehow more geometric framework for spin structures, based on the universal covering of the frame bundle. It is interesting to briefly review it and also to compare to it; see \cite{Dabrowski}.

Whenever the manifold $M$ is orientable we can reduce the structure group of $L(M)$ to $\GL_+(m)$, the connected component of the identity in $\GL(m)$, to get the bundle of positively oriented frames $L_+(M)$. The double cover of $L_+(M)$ is denoted by $\widetilde L_+(M)$ and is a principal bundle with structure group $\widetilde{\GL}_+(m)$, the double cover of $\GL_+(m)$, the covering map is $\tilde \pi \colon \widetilde L_+(M) \to L_+(M)$. A spin structure $\Sigma$ is then defined to be a $\Spin(r,s)$-subbundle of $\tilde L_+(M)$.

Fixing a metric $g$ of signature $(r,s)$ on $M$ defines the subbundle $SO(M, g)$ of positive $g$-orthonormal frames. Consider the embedding $\iota \colon SO(M, g) \hookrightarrow L_+(M)$ and define the principal $\Spin(r,s)$-bundle $\Sigma_g := \iota^* SO(M,g)$. We have the commutative diagram
\begin{equation}
\begindc{\commdiag}[10]
\obj(100, 30)[SO]{$SO(M,g)$}
\obj(100, 80)[S]{$\Sigma_g$}
\obj(170, 30)[L]{$L_+(M)$}
\obj(170, 80)[LL]{$\widetilde L_+(M)$}
\mor{SO}{L}{$\iota$}[\atleft, \injectionarrow]
\mor{S}{SO}{}
\mor{S}{LL}{$\tilde \iota$}[\atleft, \injectionarrow]
\mor{LL}{L}{$\tilde \pi$}
\enddc
\end{equation}
so that $(\Sigma_g, \tilde \pi \circ \tilde\iota)$ is a standard spin structure.

In this more general framework, one does not need to fix a metric a priori. On the contrary we can consider all spin structures at once. In particular, one can lift a diffeomorphism of $M$ to $L(M)$ by using its natural bundle structure, and to $\widetilde L_+(M)$ by using the covering properties. As a result, in this framework one can systematically define isomorphisms of spin structures, possibly with different underlying metrics and define natural-like properties. However, all spin structures end up in the same framework, even those with non-isomorphic $\Spin(r,s)$-bundles. This generalization is not necessary whenever we have a fixed metric $g$, while it is needed when discussing properties of all spin structures at once (e.g.\ in a variational setting).

According to this standard setting, we give the definition of spin manifold.

\begin{definition}
A real, smooth, orientable manifold $M$ which allows global metrics of signature $(r,s)$ is called {\it spin manifold} if it admits a spin structure $(P, \hat \ell)$.
\end{definition}

\begin{remark}
From now on we shall fix a signature $(r,s)$ and consider a spin manifold $M$. If $M$ admits a metric $g$, we will also call $(M,g)$ a riemannian manifold, regardless of the signature $(r,s)$.
\end{remark}

Although this setting is very well suited to discuss properties on a fixed riemannian structure $(M, g)$, it is considerably less suited to discuss the gravitational theory in which the metric structure is unknown until one solves the Einstein equations (possibly coupled to other equations). In other words, one needs an equivalent formulation of spin structures, though in the category of spin manifolds rather than in the category of riemannian manifolds. 

\begin{definition}
Consider the double cover $\ell \colon \Spin(r,s) \rightarrow \SO(r,s)$ and the canonical embedding $\iota_{r,s} \colon \SO(r,s) \hookrightarrow \GL(m)$. Given a spin manifold $M$ in a signature $(r,s)$, a {\it spin frame} is principal bundle map $e \colon P \to L(M)$ with respect to the group morphism $\iota_{r,s} \circ \ell$, from some principal $\Spin(r,s)$-bundle $P$ on $M$ into the frame bundle $L(M)$. The commutative diagram is
\begin{equation}
\begindc{\commdiag}[10]
\obj(100, 30)[M1]{$M$}
\obj(100, 80)[P]{$P$}
\obj(170, 30)[M2]{$M$}
\obj(170, 80)[L]{$L(M)$}
\mor(100,29)(170,29){}[\atleft, \solidline] \mor(100,31)(170,31){}[\atleft, \solidline]
\mor{P}{M1}{$p$}
\mor{P}{L}{$e$}
\mor{L}{M2}{$\pi$}
\enddc
\end{equation}
\end{definition}

Denote by $\eta$ the canonical bilinear form on $\R^m$ which is represented in the canonical basis by the diagonal matrix of signature $(r,s)$, namely $\diag(\eta)= \diag(-1, \dots, -1, 1, \dots, 1)$ with $r$ pluses and $s$ minuses. Fix a trivialization on $P$ over an open set $U\subset M$, given by a local section $\sigma$. We have
\begin{equation}
e(\sigma(x)) = E_x \in L_x(M), \quad \forall x \in U
\end{equation}
Notice that, by equivariance, the values $E_x$ completely determine $e$ on $p^{-1}(U) \simeq U \times \Spin(r,s)$.

Since $E_x$ is an isomorphism $E_x \colon \R^m \rightarrow T_xM$ then, for any $X,Y \in T_xM$, the metric $g$ associated to $e$ at $x \in M$ is
\begin{equation}
g_x(X,Y) = \eta((E_x)^{-1}(X), (E_x)^{-1}(Y))
\end{equation}
Reasoning as in \cite{KobaNu}, vol.\ 1, p.\ 60, example 5.7, shows that $g$ is a well-defined smooth, symmetric, non-degenerate metric tensor of signature $(r,s)$ on all $M$.

Notice how a spin frame $e \colon P \rightarrow L(M)$ disentangles the metric structure $g$ from the spin structure: by considering only $P$ we can reduce the structure group without needing to fix a metric $g$. In this sense, the spin frame is a richer and more flexible structure than the metric $g$.

%
%
%

\section{Reduction of $\Spin(n,1)$-bundles to $\Spin(n)$}

From this section onwards we will fix $\dim M = m = n + 1$ and restrict to lorentzian signature, i.e.\ $(r,s) = (n,1)$. In the case of interest $n = 3$ we want to define an $\SU(2)$-connection out of a $\Spin(3,1)$-connection on $P$. This suggests that we should investigate the $\SU(2)$-reductions of a $\Spin(3,1)$-bundle $P$, we recall the definition:

\begin{definition}
Consider a principal $G$-bundle $p \colon P \rightarrow M$ and a closed Lie subgroup $H \subset G$. An {\it $H$-reduction of $P$} is a principal $H$-bundle $q \colon Q \rightarrow M$ with a principal bundle embedding
\begin{equation}
\begindc{\commdiag}[10]
\obj(30,30)[M1]{$M$}
\obj(100,30)[M2]{$M$}
\obj(30,80)[Q]{$Q$}
\obj(100,80)[P]{$P$}

\mor{M1}{M2}{}[\atleft, \solidline] \mor(30,33)(100,33){}[\atleft, \solidline]
\mor{Q}{M1}{$q$}
\mor{P}{M2}{$p$}
\mor{Q}{P}{}[\atleft, \injectionarrow]
\enddc
\end{equation}
When $p \colon P \rightarrow M$ admits an $H$-reduction we say that the structure group $G$ of the bundle can be reduced to $H$.
\end{definition}

Since $\SU(2) \simeq \Spin(3)$, the generalization to arbitrary dimension is to study $\Spin(n)$-reductions of a principal $\Spin(n,1)$-bundle $P$. We will also restrict to the case $n \geq 3$ since for $n = 1,2$ the spin groups are not the universal coverings of the respective special orthogonal groups.

The main result of this section is that for lorenztian spin manifolds the reduction from $\Spin(n,1)$ to $\Spin(n)$ is always possible.

\begin{theorem}\label{th1}
For any lorentzian spin manifold $M$ and any principal $\Spin(n,1)$-bundle $p \colon P \rightarrow M$, there always exists a $\Spin(n)$-reduction.
\end{theorem}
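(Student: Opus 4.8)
The plan is to reduce the statement to the standard fact that a principal $G$-bundle admits an $H$-reduction if and only if the associated bundle with fibre $G/H$ admits a global section, and then to exhibit such a section by showing that the fibre is contractible. Concretely, I would first recall (see \cite{KobaNu}, vol.\ 1, or \cite{Steenrod}) that $P$ reduces to the closed subgroup $\Spin(n) \subset \Spin(n,1)$ precisely when the associated bundle $P/\Spin(n) = P \times_{\Spin(n,1)} \left( \Spin(n,1)/\Spin(n) \right)$, whose typical fibre is the homogeneous space $\Spin(n,1)/\Spin(n)$, admits a global cross-section. This turns the reduction problem into a question about the topology of that fibre.

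The key computation is to identify the fibre. Since $\ell \colon \Spin(n,1) \to \SO_0(n,1)$ is the double cover of the identity component and its kernel $\{ \pm \one \}$ is central, and since $-\one \in \Spin(n)$, the kernel is contained in $\Spin(n)$; hence the quotient descends to $\Spin(n,1)/\Spin(n) \cong \SO_0(n,1)/\SO(n)$. The group $\SO_0(n,1)$ acts transitively on one sheet of the unit timelike hyperboloid in $\R^{n+1}$ with stabiliser $\SO(n)$, so this homogeneous space is diffeomorphic to hyperbolic $n$-space $H^n \cong \R^n$. Equivalently, $\Spin(n)$ is the maximal compact subgroup of $\Spin(n,1)$, and the Cartan decomposition $\Spin(n,1) = \Spin(n) \cdot \exp(\gotm)$ identifies the quotient with the vector space $\gotm \cong \R^n$. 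In either description the fibre is contractible.

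With a contractible fibre in hand, the existence of a global section is immediate by obstruction theory: the obstructions to extending a section over successive skeleta of $M$ live in the groups $H^{k+1}\left( M; \pi_k(\R^n) \right)$, and these all vanish because $\R^n$ is contractible; since a smooth manifold is paracompact and has the homotopy type of a CW complex, the section extends globally. I would also note a more hands-on alternative that avoids obstruction theory: because the structure group $\Spin(n,1)$ maps into the \emph{connected} Lorentz group $\SO_0(n,1)$, the associated rank-$(n+1)$ vector bundle carries a globally well-defined field of future timelike cones; local future-pointing unit timelike sections glued by a partition of unity remain timelike, by convexity of the cone, and so produce the desired global section.

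I expect the main obstacle to be the fibre identification rather than the section-existence step: one must verify carefully that $-\one$ lies in $\Spin(n)$ so that the quotient genuinely collapses to $\SO_0(n,1)/\SO(n)$, and that $\Spin(n)$ (which requires $n \geq 3$ for the spin group to be the genuine universal cover, as already assumed) is indeed a \emph{maximal} compact subgroup, so that the Cartan-decomposition description applies and the fibre is truly Euclidean. Once contractibility of the fibre is established, the rest is standard.
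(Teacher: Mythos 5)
Your proposal is correct and follows essentially the same route as the paper: both reduce the statement to the existence of a global section of the associated coset bundle with fibre $\Spin(n,1)/\Spin(n)$, identify that fibre with the upper sheet of the unit timelike hyperboloid (hence with $\R^n$), and conclude because bundles with Euclidean (contractible) fibres over a manifold always admit global sections. The only cosmetic differences are that you first quotient by the central kernel $\{\pm\one\}\subset\Spin(n)$ to pass to $\SO_0(n,1)/\SO(n)$, whereas the paper lets $\Spin(n,1)$ act on the hyperboloid directly through $\ell$ and applies the Steenrod stabiliser lemma, and that you invoke obstruction theory where the paper cites the Kobayashi--Nomizu lemma on $\R^k$-fibred bundles.
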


The proof relies on a definition and three lemmas.

\begin{definition} 
For a given principal $G$-bundle $p \colon P \rightarrow M$ and any closed Lie subgroup $H \subset G$, one has the (right) coset $G/H$ which is the set of elements $[g]$
\begin{equation}
[g] = \left\{ g' \in G : g' = gh \mbox{ for some } h \in H\right\}
\end{equation}

The group $G$ acts on the left on $G/H$ by
\begin{equation}
\lambda_H \colon G \times G/H \longrightarrow G/H \colon (g, [g']) \longmapsto [gg']
\end{equation}
The bundle associated to $P$ via this left action has $G/H$ as standard fiber, it is called {\it $H$-coset bundle of $P$} and is denoted by $p_H \colon P_H \rightarrow M$.
\end{definition}

\begin{lemma} (see, \cite{KobaNu}, vol.\ 1, p.\ 57)
The structure group of a principal $G$-bundle $p \colon P \rightarrow M$ is reducible to a closed Lie subgroup $H \subset G$ if and only if the associated $H$-coset bundle $p_H \colon P_H \rightarrow M$ admits a global cross section.
\end{lemma}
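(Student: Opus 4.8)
The plan is to prove both implications by exploiting the natural identification of the coset bundle $P_H$ with the orbit space $P/H$. The crucial preliminary observation is that the map $\phi \colon P \to P_H$ sending $u \mapsto [u, [e]]$, where $[e] = H$ denotes the base coset and $e \in G$ the identity, is constant exactly on the right $H$-orbits of $P$. Indeed, using the defining relation of the associated bundle together with the left action $\lambda_H$, one has $[uh, [e]] = [u, h \cdot [e]] = [u, [h]] = [u, [e]]$ for $h \in H$, whereas $[ug, [e]] = [u, [g]] \neq [u, [e]]$ whenever $g \notin H$. Since $H$ is closed in $G$, the right $H$-action on $P$ is free and proper, so $\phi$ exhibits $P_H$ as the quotient $P/H$ and is itself a locally trivial principal $H$-bundle over $P_H$. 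This structural fact does essentially all the work.

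For the forward direction, suppose $q \colon Q \to M$ is an $H$-reduction with embedding $Q \hookrightarrow P$. I would define $s \colon M \to P_H$ by $s(x) = \phi(u)$ for any $u \in Q$ with $p(u) = x$. This is well defined because the fibre $Q_x$ is a single right $H$-orbit in $P$ and $\phi$ is constant on $H$-orbits; smoothness follows by composing $\phi$ with any local section of $Q$. Since $p_H \circ s = \mathrm{id}_M$, this $s$ is the desired global cross section.

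For the converse, suppose $s \colon M \to P_H$ is a global section. I would form the pullback $Q := s^\ast P$ of the principal $H$-bundle $\phi \colon P \to P_H$ along $s$, which concretely is $Q = \{ u \in P : \phi(u) = s(p(u)) \}$. General properties of pullbacks guarantee that $Q \to M$ is again a principal $H$-bundle and that the tautological map $Q \to P$ is an $H$-equivariant embedding covering $\mathrm{id}_M$; one then checks directly that each fibre $Q_x$ is precisely the $H$-orbit $s(x) \subset p^{-1}(x)$, so that $H$ acts simply transitively on the fibres and $Q$ is an $H$-reduction of $P$.

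The main obstacle is not the set-theoretic construction, which is routine once the identification $P_H \cong P/H$ is in hand, but verifying that the resulting $Q$ is a smooth, locally trivial principal $H$-subbundle rather than merely an $H$-invariant subset with the correct fibres. This is exactly where the closedness of $H$ in $G$ is indispensable: it ensures that $\phi \colon P \to P_H$ is a genuine principal $H$-bundle (equivalently, that $G \to G/H$ is a principal $H$-bundle), so that the pullback $s^\ast P$ inherits local triviality for free. Without this hypothesis the fibrewise bijection would carry no guarantee of smooth local trivialisations, and the argument would break down.
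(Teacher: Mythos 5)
Your argument is correct: the identification $P_H \cong P/H$ via $u \mapsto [u,[e]]$, the induced section from a reduction, and the pullback $s^*P$ in the converse direction constitute precisely the standard proof of Proposition 5.6 in Kobayashi--Nomizu, which the paper cites without reproducing. You also correctly isolate where closedness of $H$ enters (local triviality of $G \to G/H$, hence of $\phi \colon P \to P_H$), so nothing is missing.
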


\begin{lemma} (see, \cite{KobaNu}, vol.\ 1, p.\ 58)
Consider a fiber bundle $\pi \colon B \rightarrow M$ with standard fiber $F$. If the fiber $F$ is diffeomorphic to $\R^k$, for some $k \in \mathbb{N}$, then the bundle admits a global cross section.
\end{lemma}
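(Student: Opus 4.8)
The plan is to exploit two features of the fiber: it is contractible and, once a local trivialization is fixed, it carries the convex structure of $\R^k$. Since $M$ is a smooth manifold it is paracompact, so I would begin by choosing a countable, locally finite open cover $\{U_i\}_{i\in\mathbb{N}}$ of $M$ by trivializing sets, together with diffeomorphisms $\phi_i\colon \pi^{-1}(U_i)\to U_i\times\R^k$ and a shrinking $\{W_i\}$ with $\overline{W_i}\subset U_i$ that still covers $M$; I would also fix bump functions $\lambda_i$ with $\lambda_i\equiv 1$ on $\overline{W_i}$ and $\operatorname{supp}\lambda_i\subset U_i$. Over a single $U_i$ a section is nothing but a smooth map $U_i\to\R^k$ in the trivialization $\phi_i$, so local sections obviously exist; the whole difficulty is to glue them into one global section.

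\textbf{Main construction.} I would build the section inductively. Set $M_0=\emptyset$ and $M_j=\overline{W_1}\cup\cdots\cup\overline{W_j}$, and suppose a smooth section $s_{j-1}$ has already been defined on an open neighborhood of $M_{j-1}$. To extend it across $\overline{W_j}$ I read everything in the fixed trivialization $\phi_j$ over $U_j$: there the zero section $x\mapsto\phi_j^{-1}(x,0)$ is an obvious smooth section over all of $U_j$, while $s_{j-1}$ appears merely as an $\R^k$-valued map on a neighborhood of a closed subset of $U_j$. Using $\lambda_j$ I interpolate between the two by the convex combination $(1-\lambda_j)\,s_{j-1}+\lambda_j\cdot 0$ in the fiber coordinate: the result equals the zero section on $\overline{W_j}$, equals $s_{j-1}$ where $\lambda_j=0$, and is a genuine convex combination of two points of $\R^k$ in between. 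This produces a smooth section $s_j$ agreeing with $s_{j-1}$ off $\operatorname{supp}\lambda_j$ and defined on a neighborhood of $M_j$.

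The central point — and the reason the fiber must be diffeomorphic to $\R^k$ rather than to an arbitrary manifold — is that convex combination of two fiber values is \emph{not} an operation available on a general fiber bundle, whose transition functions are arbitrary diffeomorphisms (in the paper's own application the coset fiber $\Spin(n,1)/\Spin(n)$ carries a structure group acting by isometries, not affinely). I would stress that the interpolation above is performed entirely inside the single trivialization $\phi_j$, where the two competing sections are honest $\R^k$-valued functions, so the transition functions never enter the convex combination and $s_j$ is a bona fide section because it is assembled as one geometric object. Equivalently, contractibility of $\R^k$ guarantees that the obstruction to extending a section across a cell, living in $\pi_{j-1}(\R^k)=0$, always vanishes; the convexity argument is the concrete incarnation of this homotopy-theoretic fact.

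Finally I would check that the inductively defined sections stabilize. By local finiteness of $\{U_i\}$, every point of $M$ has a neighborhood meeting only finitely many $\operatorname{supp}\lambda_i$, so near that point only finitely many extension steps alter the section and the limit $s=\lim_j s_j$ is well defined, smooth, and globally defined since $\bigcup_j W_j=M$. I expect the delicate bookkeeping to be the main technical obstacle: arranging at each step that the support of the interpolation stays inside the region where $s_{j-1}$ is already defined, and that $s_{j-1}$ is left untouched on a neighborhood of $M_{j-1}$ so the limit is genuinely smooth, together with invoking local finiteness for convergence. The conceptual heart, by contrast, is simply the convexity (equivalently, contractibility) of $\R^k$.
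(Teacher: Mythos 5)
The paper itself gives no proof of this lemma: it is stated purely as a citation to Kobayashi--Nomizu, vol.~1, p.~58, so the only meaningful comparison is with the argument in that reference. Your proof is correct, and it takes a genuinely different (though related) route. Kobayashi--Nomizu prove the stronger, relative statement that any cross section defined on a closed subset of the paracompact base extends to a global one; their local step is the Tietze extension theorem applied in a trivialization, where a section is just an $\R^k$-valued map, and their global step is a well-ordering/induction over a locally finite cover. You instead build a section from scratch, using bump functions and the affine structure of $\R^k$ to interpolate against the zero section of each chart; this is legitimate precisely because you allow later steps to overwrite earlier values, which an extension argument cannot afford to do. What your route buys: smoothness is automatic (Tietze gives only continuity, so the classical argument needs an extra smoothing step), and the proof is elementary and self-contained. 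What it loses: the relative version, and, as written, it requires a countable trivializing atlas, i.e.\ second countability of $M$, whereas the well-ordering in the classical proof handles arbitrary paracompact bases. One point you dismiss as ``bookkeeping'' should be made explicit, since it is exactly where a careless write-up would fail: the formula $(1-\lambda_j)\,s_{j-1}+\lambda_j\cdot 0$ only makes sense on $N_{j-1}\cap U_j$, where $N_{j-1}$ denotes the open domain of $s_{j-1}$; at points of $U_j$ with $0<\lambda_j<1$ lying outside $N_{j-1}$ it is undefined. The fix is to choose $\lambda_j\equiv 1$ on a full neighborhood of $\overline{W_j}$ and to define $s_j$ piecewise: by the interpolation formula on $N_{j-1}\cap U_j$, as the zero section of $\phi_j$ on $\operatorname{int}\{\lambda_j=1\}$, and as $s_{j-1}$ on $N_{j-1}\setminus\operatorname{supp}\lambda_j$; these clauses agree on overlaps, and the resulting domain $N_j=N_{j-1}\cup\operatorname{int}\{\lambda_j=1\}$ is an open neighborhood of $M_j=M_{j-1}\cup\overline{W_j}$. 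With that spelled out, your stabilization argument via local finiteness correctly closes the proof.
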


We want to prove \hyperref[th1]{Theorem 1} by showing that the coset $\Spin(n,1)/\Spin(n)$ is diffeomorphic to $\R^n$, we do so using the third lemma.

\begin{lemma} (see \cite{Steenrod}, p.\ 30)
Consider a smooth manifold $X$ and a transitive Lie group action $\lambda \colon G \times X \rightarrow X$, fix a point $x_0 \in X$. Let $H \subset G$ be the stabilizer subgroup of $x_0$, which is a closed Lie subgroup. If the map
\begin{equation}
\lambda_{x_0} \colon G \rightarrow X \colon g \mapsto g \cdot x_0
\end{equation}
is an open map, then
\begin{equation}
G/H \simeq X
\end{equation}
is a diffeomorphism.
\end{lemma}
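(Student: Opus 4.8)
The plan is to build the natural orbit map as a map of quotients and then improve it step by step: first to a smooth bijection, then to a homeomorphism using the openness hypothesis, and finally to a diffeomorphism via an equivariance argument. To start, I note that for every $h \in H$ one has $\lambda_{x_0}(gh) = (gh)\cdot x_0 = g\cdot(h\cdot x_0) = g\cdot x_0 = \lambda_{x_0}(g)$, so the orbit map is constant on left cosets and descends to a map $\bar\lambda \colon G/H \to X$ satisfying $\lambda_{x_0} = \bar\lambda \circ \pi$, where $\pi \colon G \to G/H$ is the canonical projection. The orbit–stabilizer correspondence shows $\bar\lambda$ is a bijection: surjectivity is transitivity of the action, and injectivity follows because $g\cdot x_0 = g'\cdot x_0$ forces $g^{-1}g' \in H$, i.e.\ $gH = g'H$. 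Since $H$ is a closed Lie subgroup, I may invoke the standard smooth structure on $G/H$ under which $\pi$ is a surjective submersion; then, as $\lambda_{x_0} = \bar\lambda \circ \pi$ is smooth and constant on the fibers of $\pi$, the map $\bar\lambda$ is smooth.

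Next I would bring in the openness hypothesis to get a homeomorphism. The projection $\pi$ is an open map, and for any open $V \subseteq G/H$ one has $\bar\lambda(V) = \lambda_{x_0}(\pi^{-1}(V))$, which is open in $X$ because $\lambda_{x_0}$ is open by assumption. Hence $\bar\lambda$ is an open continuous bijection, and therefore a homeomorphism.

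The hard part is the final upgrade, since a smooth homeomorphism need not be a diffeomorphism (as $t \mapsto t^3$ already shows), so smoothness of the inverse must be argued separately. The key observation is that $\bar\lambda$ is $G$-equivariant for left translation on $G/H$ and the given action on $X$, because $\bar\lambda(g'\cdot gH) = (g'g)\cdot x_0 = g'\cdot\bar\lambda(gH)$. As $G$ acts transitively on both manifolds and by diffeomorphisms, any two points of $G/H$ are related by such a diffeomorphism, and equivariance makes the differentials $d\bar\lambda$ at those points conjugate; thus $\bar\lambda$ has constant rank. Injectivity together with the rank theorem then forces this rank to equal $\dim(G/H)$, while openness (or surjectivity) forces it to equal $\dim X$, since a constant-rank map of rank below $\dim X$ would have image locally inside a proper submanifold with empty interior, contradicting openness. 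Therefore $\bar\lambda$ is simultaneously an immersion and a submersion, i.e.\ a local diffeomorphism, and a bijective local diffeomorphism is a diffeomorphism, which is the claim.
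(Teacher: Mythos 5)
Your proof is correct and complete. Note that the paper itself does not prove this lemma at all: it simply cites Steenrod (p.~30), and the result cited there is purely topological --- it yields only that the induced map $\bar\lambda \colon G/H \to X$ is a homeomorphism, which is the content of your first two paragraphs (descent of the orbit map, bijectivity via orbit--stabilizer, smoothness via the submersion $\pi$, and openness). The genuinely smooth part of the statement --- that this homeomorphism is in fact a diffeomorphism --- is exactly what your third paragraph supplies, and you are right that it cannot be skipped (your $t \mapsto t^3$ remark is the correct warning): the $G$-equivariance of $\bar\lambda$ forces constant rank, injectivity plus the rank theorem pins that rank at $\dim(G/H)$, openness pins it at $\dim X$, so $\bar\lambda$ is a bijective local diffeomorphism and hence a diffeomorphism. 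Thus your argument both recovers the cited topological statement and fills in the smooth upgrade that the paper leaves implicit in passing from Steenrod's theorem to the lemma as stated. An alternative route to the same upgrade is the standard homogeneous-space (equivariant rank) theorem, which for second-countable $G$ even dispenses with the openness hypothesis via a Baire-category argument; the openness hypothesis is precisely what lets your proof remain elementary and avoid any countability assumptions.
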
 

\begin{theorem}
The right coset ${}{\Spin(n,1)}/{\Spin(n)}$ is diffeomorphic to the euclidean space $\R^n$.
\end{theorem}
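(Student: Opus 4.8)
The plan is to apply the third lemma to a transitive action of $G=\Spin(n,1)$ on an explicit model manifold $X$ that is diffeomorphic to $\R^n$ and whose point stabilizer is exactly $\Spin(n)$. The natural candidate is the upper sheet of the unit timelike hyperboloid in $(\R^{n+1},\eta)$, namely
\[
X = \{\, x\in\R^{n+1} : \eta(x,x) = -1,\ x^0>0 \,\},
\]
i.e.\ the $n$-dimensional hyperbolic space. Writing $x=(x^0,\vec x)$, the defining equation reads $(x^0)^2 = 1+|\vec x|^2$, so the projection $x\mapsto\vec x$ is a global diffeomorphism $X\simeq\R^n$; this disposes of the target identification at once.

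First I would equip $X$ with a $\Spin(n,1)$-action. The double cover $\ell\colon\Spin(n,1)\to\SO_0(n,1)$ of the identity component, composed with the standard linear action of $\SO_0(n,1)$ on $\R^{n+1}$, preserves both $\eta$ and the condition $x^0>0$, hence restricts to a smooth action on $X$. Since $\SO_0(n,1)$ already acts transitively on each sheet of the hyperboloid and $\ell$ is surjective, this action is transitive. Fix the base point $x_0=(1,\vec 0)$: its stabilizer in $\SO_0(n,1)$ is the subgroup fixing the timelike direction $x_0$ and acting isometrically on the positive-definite orthogonal complement, which is $\SO(n)$, so the stabilizer of $x_0$ in $\Spin(n,1)$ is $\ell^{-1}(\SO(n))$. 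The key point is that $\ell^{-1}(\SO(n))=\Spin(n)$ rather than a disconnected double cover: the standing assumption $n\ge 3$ makes $\Spin(n)\subset\Spin(n,1)$ the connected double cover of $\SO(n)$, and it already contains the kernel $\{\pm\one\}$ of $\ell$; being a double cover of $\SO(n)$ contained in the double cover $\ell^{-1}(\SO(n))$, the inclusion is forced to be an equality. Thus the stabilizer is precisely $\Spin(n)$.

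It remains to verify the openness hypothesis of the third lemma for the orbit map $\lambda_{x_0}\colon\Spin(n,1)\to X$, which I would do by showing it is a submersion. Writing $\tau_g\colon X\to X$ for the diffeomorphism $x\mapsto g\cdot x$ and $L_g$ for left translation, equivariance gives $\lambda_{x_0}\circ L_g=\tau_g\circ\lambda_{x_0}$, so the rank of $d\lambda_{x_0}$ is constant along $G$ and it suffices to check surjectivity at the identity. Passing through $\ell$, the infinitesimal action is that of $\soC(n,1)$: the rotation generators $M_{ij}$ ($i,j\ge 1$) annihilate $x_0$, while the $n$ boost generators $M_{0i}$ send $x_0$ to the spatial basis vectors $e_i$, and these span $T_{x_0}X=\{v:\eta(x_0,v)=0\}=\{v^0=0\}$. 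Hence $d\lambda_{x_0}$ is surjective, $\lambda_{x_0}$ is a submersion and therefore open, and the third lemma yields $\Spin(n,1)/\Spin(n)\simeq X\simeq\R^n$.

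I expect the main obstacle to be the covering-theoretic identification of the stabilizer $\ell^{-1}(\SO(n))$ with $\Spin(n)$: one must be sure that the restriction of the double cover over $\SO(n)$ is the connected one and does not split off a second component, which is exactly where the hypothesis $n\ge 3$ (guaranteeing that $\Spin(n)$ is the genuine connected double cover of $\SO(n)$) is used. By contrast, the openness of the orbit map is a routine constant-rank computation and the diffeomorphism $X\simeq\R^n$ is immediate, so these parts should present no difficulty.
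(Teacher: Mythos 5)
Your proof is correct and follows essentially the same route as the paper: apply the orbit--stabilizer lemma to the action of $\Spin(n,1)$ on the upper sheet of the unit hyperboloid with base point $x_0=(1,\mathbf{0})$, then project to $\R^n$. In fact you are more careful than the paper on the two points it glosses over --- you verify that the stabilizer is exactly $\ell^{-1}(\SO(n))=\Spin(n)$ rather than merely containing $\Spin(n)$, and you establish openness of the orbit map by a constant-rank submersion argument instead of the paper's brief appeal to path-connectedness.
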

\begin{proof}
We prove this fact using the previous lemma. Consider the submanifold of $\R^{n+1}$
\begin{equation}
X = \left\{ (t, {\bf x}) \in \R^{n+1} : -t^2 + |{\bf x}|^2 = -1 , t > 0\right\}
\end{equation}
with the subspace topology. The action of $\Spin(n,1)$ is given by
\begin{equation}
S \cdot x = \ell(S) x, \quad \forall S \in \Spin(n,1)
\end{equation}
where $\ell \colon \Spin(n,1) \rightarrow \SO(n,1)$ is the two-to-one covering map and $\SO(n,1)$ acts through its fundamental representation.

By definition, an element $\bar S \in \Spin(n) \subset \Spin(n,1)$ acts only on the ${\bf x}$ part of a vector $v = (t, {\bf x})$. Therefore $\Spin(n)$ fixes the point $x_0 = (1, {\bf 0})$. Since $\Spin(n,1)$ is a path-connected Lie group and since
\begin{equation}
X = \Spin(n,1) \cdot x_0
\end{equation}
any open neighborhood $U$ of the identity $\one \in \Spin(n,1)$ is mapped into an open neighborhood of $x_0 \in X$. The hypotheses of the lemma above are all verified and we have
\begin{equation}
\Spin(n,1)/{\Spin(n)} \simeq X \simeq \R^n
\end{equation}
\end{proof}

\section{Barbero--Immirzi Connections through Reductive Pairs}

The main result of the previous section implies that for any $\Spin(n,1)$-bundle {$p \colon P \rightarrow M$} over a lorentzian spin manifold $M$ there exists a reduction to a $\Spin(n)$-bundle $q \colon Q \rightarrow M$. The commutative diagram is
\begin{equation}
\begindc{\commdiag}[10]
\obj(30,30)[M1]{$M$}
\obj(100,30)[M2]{$M$}
\obj(30,80)[Q]{$Q$}
\obj(100,80)[P]{$P$}

\mor{M1}{M2}{}[\atleft, \solidline] \mor(30,33)(100,33){}[\atleft, \solidline]
\mor{Q}{M1}{$q$}
\mor{P}{M2}{$p$}
\mor{Q}{P}{}[\atleft, \injectionarrow]
\enddc
\end{equation}


In this section we define the geometric framework for building BI connections on all $M$: given a principal connection $\omega$ on a $\Spin(n,1)$-bundle $p \colon P \rightarrow M$ and a $\Spin(n)$-reduction $q \colon Q \rightarrow M$, we define a unique principal connection $A$ on $Q$ out of $\omega$.

The main result relies on the definition of reductive pair of Lie groups.

\begin{definition}
(see \cite{KobaNu} vol.\ 1, p.\ 83) A pair of Lie groups $(G, H)$, with Lie algebras $\gotg$ and $\goth \subset \gotg$ respectively, is a {\it reductive pair} if: 

\begin{enumerate}[(i)]
\item $H \subset G$ is a closed Lie subgroup;

\item there exists a {\it reductive splitting}, i.e.~a vector space splitting
\begin{equation}
{\gotg} = {\goth} \oplus {\gotm} \quad \hbox{and} \quad \Ad_G(H) \, {\gotm} \subseteq {\gotm}
\end{equation}
\end{enumerate}
where $\Ad_G(H)$ is the adjoint representation of $G$ restricted to $H$.
\end{definition}

\begin{remark}
Notice that the vector subspace $\gotm \subset \gotg$ in the previous definition need not be a Lie subalgebra.
\end{remark}

The definition of reductive pair is particularly useful due to the following theorem.

\begin{theorem}
(see, \cite{KobaNu}, vol.\ 1, p.\ 83)
Let $(G, H)$ be a reductive pair with Lie algebras $\gotg, \goth$ respectively. Consider a principal $G$-bundle $p \colon P \rightarrow M$ with an $H$-reduction $q \colon Q \rightarrow M$. Choose a connection $1$-form $\omega$ on $P$, call $A$ its $\goth$-component and $K$ its $\gotm$-component. 

With this splitting the restriction the $A|_Q$ is a connection $1$-form on $Q$ while the restriction $K|_Q$ is a tensorial $1$-form of type $(\Ad_G, \gotm)$.
\end{theorem}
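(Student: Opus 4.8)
The plan is to verify the two defining axioms of a principal connection for $A|_Q$ and the two defining axioms of a tensorial form for $K|_Q$, working throughout along the inclusion $\iota \colon Q \hookrightarrow P$. Recall that $\omega$ being a connection $1$-form on $P$ means that it reproduces the generators of fundamental vector fields, $\omega(\xi^*) = \xi$ for every $\xi \in \gotg$, and that it is equivariant, $R_g^* \omega = \Ad_G(g^{-1}) \omega$ for every $g \in G$. By ``restriction'' I understand $\iota^* \omega$, a $\gotg$-valued $1$-form on $Q$, decomposed along $\gotg = \goth \oplus \gotm$ as $\iota^*\omega = A|_Q + K|_Q$.

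First I would check that $A|_Q$ reproduces fundamental vector fields on $Q$. The key preliminary observation is that, since $H$ acts on $Q$ as the restriction of the $G$-action on $P$, the fundamental vector field on $Q$ generated by $\xi \in \goth$ is $\iota$-related to the fundamental vector field $\xi^*$ on $P$; in particular $\xi^*$ is tangent to $Q$. Then $A|_Q(\xi^*) = (\omega(\xi^*))_\goth = (\xi)_\goth = \xi$, because $\xi \in \goth$ is its own $\goth$-component. For equivariance, since $\iota \circ R_h = R_h \circ \iota$ for $h \in H$ one gets $R_h^*(\iota^*\omega) = \iota^*(\Ad_G(h^{-1})\omega) = \Ad_G(h^{-1})\,\iota^*\omega$, and projecting onto $\goth$ yields the desired transformation law.

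The step where everything hinges is projecting this last identity onto the two summands, and this is exactly where the reductive hypothesis enters. Since $H$ is a subgroup, $\goth$ is automatically $\Ad_G(H)$-invariant; the reductive splitting adds $\Ad_G(H)\,\gotm \subseteq \gotm$. Hence $\Ad_G(h^{-1})$ commutes with the two projectors onto $\goth$ and $\gotm$, so that $R_h^* A|_Q = \Ad_H(h^{-1}) A|_Q$ (with $\Ad_H = \Ad_G|_H$ acting on $\goth$) and $R_h^* K|_Q = \Ad_G(h^{-1}) K|_Q$ follow simultaneously. I expect this invariance of the decomposition under $\Ad_G(h^{-1})$ to be the main and only real obstacle: without reductivity the $\gotm$-component of the transformed form would leak into $\goth$, and $A|_Q$ would fail to be a connection.

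Finally, to conclude that $K|_Q$ is tensorial of type $(\Ad_G, \gotm)$ it remains to verify horizontality, i.e.\ that $K|_Q$ annihilates vertical vectors. For $\xi \in \goth$ I compute $K|_Q(\xi^*) = (\omega(\xi^*))_\gotm = (\xi)_\gotm = 0$, since the $\gotm$-component of an element of $\goth$ vanishes; combined with the equivariance established above, this is precisely the definition of a tensorial $1$-form valued in the $\Ad_G(H)$-module $\gotm$. Both claims then follow.
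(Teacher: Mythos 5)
Your proof is correct and complete: the paper itself states this theorem without proof, citing Kobayashi--Nomizu, and your argument is precisely the standard one found there --- restrict $\omega$ along $\iota \colon Q \hookrightarrow P$, check reproduction of fundamental vector fields and equivariance for the $\goth$-part, and horizontality plus equivariance for the $\gotm$-part. You also correctly isolate the one place where the reductive hypothesis $\Ad_G(H)\,\gotm \subseteq \gotm$ is indispensable, namely that it makes $\Ad_G(h^{-1})$ commute with the projections onto $\goth$ and $\gotm$ so that the two transformation laws decouple.
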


\begin{remark}
For a fixed reductive splitting $\gotg = \goth \oplus \gotm$ as in the previous theorem, we have a one-to-one correspondence between connections $\omega$ on $P$ and pairs $(A,K)$ on $Q$.
\end{remark}

We can now state the fundamental result of this work.

\begin{theorem}\label{th4}
If $n >  3$,  $(\Spin(n,1), \Spin(n))$ has a unique reductive splitting
\begin{equation}
\spin(n,1) = \spin(n) \oplus \gotm_0,
\end{equation}

If $n =3 $, we have a $1$-parameter family of reductive splittings
\begin{equation}
\spin(3,1) = \spin(3) \oplus \gotm_\BIp
\label{BIReductiveSplitting}
\end{equation}
The parameter $\BIp$ is called the {\it Barbero--Immirzi parameter} of the splitting. 
\end{theorem}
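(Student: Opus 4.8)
The plan is to recast the classification of reductive splittings as a question in the representation theory of $\spin(n)$ and then settle it with a Schur-lemma computation that singles out $n=3$. First I would pass to the infinitesimal level: since $\Spin(n)$ is connected, the invariance condition $\Ad_{\Spin(n,1)}(\Spin(n))\,\gotm \subseteq \gotm$ in the definition of a reductive splitting is equivalent to $[\spin(n), \gotm] \subseteq \gotm$, so that a reductive splitting is exactly a $\spin(n)$-submodule $\gotm$ of $\spin(n,1)$ complementary to $\goth = \spin(n)$. Using the identification $\spin(n,1) \cong \soC(n,1)$ as the $\eta$-antisymmetric endomorphisms of $\R^{n+1}$, the spatial rotations give $\goth = \spin(n)$ and the boosts give a distinguished complement $\gotm_0$. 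Under the adjoint action of $\goth$ one then identifies $\gotm_0$ with the standard vector module $\R^n$ of $\soC(n)$, while $\goth = \spin(n)$ is the adjoint module, canonically $\Lambda^2\R^n$.

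Next I would parametrize all invariant complements. Because $\spin(n)$ is compact its real modules are completely reducible, so at least one invariant complement exists; fixing the reference $\gotm_0$, every $\spin(n)$-submodule complementary to $\goth$ is the graph $\{\,v + \phi(v) : v \in \gotm_0\,\}$ of a unique $\soC(n)$-equivariant map $\phi \colon \gotm_0 \to \goth$, and conversely each such graph is an invariant complement. Hence the set of reductive splittings is an affine space modelled on $\operatorname{Hom}_{\soC(n)}(\R^n, \Lambda^2\R^n)$, and the entire theorem reduces to computing the dimension of this space of equivariant maps.

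The decisive computation is a dimension-and-Schur argument that isolates $n=3$. For $n \geq 3$ the vector module $\R^n$ is irreducible, so any nonzero $\phi$ would force a copy of $\R^n$ to sit inside $\Lambda^2\R^n$. When $n = 3$ the Hodge star supplies the exceptional isomorphism $\Lambda^2\R^3 \cong \R^3$ of $\soC(3)$-modules, so $\operatorname{Hom}_{\soC(3)}(\R^3, \Lambda^2\R^3) \cong \End_{\soC(3)}(\R^3) \cong \R$ is one-dimensional; the resulting affine line of complements is precisely the family $\gotm_\BIp$, with $\phi = \BIp$ times the Hodge identification furnishing the Barbero--Immirzi parameter. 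When $n > 3$ one has $\dim \Lambda^2\R^n = \binom{n}{2} > n$: for $n \geq 5$ the module $\Lambda^2\R^n$ is the irreducible adjoint module, of dimension strictly larger than $n$, and so contains no copy of $\R^n$; for $n = 4$ it splits as $\Lambda^2\R^4 = \Lambda^2_+ \oplus \Lambda^2_-$ into two irreducible $3$-dimensional summands, neither isomorphic to the $4$-dimensional $\R^4$. In every case $n > 3$ the Hom space vanishes, so $\gotm_0$ is the unique reductive complement.

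The main obstacle I anticipate is the case analysis in this last step, and in particular the exceptional value $n = 4$, where $\Lambda^2\R^n$ fails to be irreducible and one cannot simply invoke irreducibility of the adjoint module; here I would compare the dimensions of the self-dual and anti-self-dual pieces against $\dim\R^4$ to rule out an embedding. A second point needing care is the real-type irreducibility of the vector module, i.e.\ that $\End_{\soC(n)}(\R^n) = \R$ rather than $\mathbb{C}$ or $\mathbb{H}$, since this is what pins the $n=3$ family down to a single real parameter $\BIp$ rather than a larger family; I would verify it by complexifying and checking that the complex vector module of $\soC(n,\mathbb{C})$ remains irreducible for all $n \geq 3$.
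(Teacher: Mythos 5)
Your proposal is correct and follows essentially the same route as the paper: both reduce the classification of reductive complements to computing the space of $\Spin(n)$-equivariant linear maps from the vector module $\R^n$ to $\spin(n)$, and both settle it by Schur's lemma together with the dimension count $n = \binom{n}{2}$ that singles out $n=3$ via the isomorphism $\Lambda^2\R^3\cong\R^3$. The only substantive difference is your treatment of $n=4$, which is in fact cleaner than the paper's: you use that the real vector representation of $\Spin(4)$ on $\R^4$ is irreducible of dimension $4$ while $\spin(4)$ splits into two $3$-dimensional invariant summands, whereas the paper argues via a decomposition of the vector representation into two $2$-dimensional invariant subspaces (a statement appropriate to the complex half-spin representations rather than the real vector representation) before reaching the same conclusion $\psi=0$.
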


In the proof the algebras $\spin(n,1)$ and $\spin(n)$ are seen as subalgebras of the Clifford algebra, let us recall the basic definitions and fix our notations.


Consider $V = \R^m$ equipped with a non-degenerate, symmetric bilinear form $\eta$ of signature $(r,s)$ and the relative Clifford algebra $\Cl(V,\eta)$. We have an injective map $V \hookrightarrow \Cl(V,\eta)$, for any vector $v \in V$ we denote with the boldface ${\bf v}$ its image in the Clifford algebra.

The metric $\eta$ induces a non-degenerate bilinear form $Q$ on $\Cl(V,\eta)$, if $\eta$ has signature $(r,s)$ we define the spin group $\Spin(r,s)$ as the group generated by products of elements $S \in \Cl(V,\eta)$ such that $Q(S) = 1$. One can the prove that the adjoint action 
\begin{equation}
{\bf v} \longmapsto \Ad(S) {\bf v} = S \, {\bf v} \, S^{-1}, \quad \forall v \in V
\end{equation}
is an $\eta$-orthogonal transformation for any $S \in \Spin(r,s)$. 

Denote by $\ell \colon \Spin(r,s) \rightarrow \SO(r,s)$ the double covering of the spin group into the (connected component of the identity of the) special orthogonal group. The action of $\SO(r,s)$ on $V$ is assumed to be the standard representation. Since $V$ is mapped injectively into $\Cl(V,\eta)$ and since $\Ad(S) = \Ad(-S)$, we have that
\begin{equation}
\Ad(S) {\bf v} = \ell(S) v, \quad \forall v \in V
\end{equation}

Now choose an $\eta$-orthogonal basis for $V$ of vectors $\left\{ e_1, \dots, e_m \right\}$, the spin algebra $\spin(r,s)$ is the real vector subspace of $\Cl(V, \eta)$ generated by products of two elements, that is
\begin{equation}
\spin(r,s) = \langle {\bf e}_a {\bf e}_b : a,b = 1, \dots, m \mbox{ and } a \neq b \rangle 
\end{equation}
Recall that ${\bf e}_a {\bf e}_b = -{\bf e}_b {\bf e}_a$ so that only elements with $a < b$ form a basis. The adjoint representation of $\Spin(r,s)$ on its algebra $\spin(r,s)$ has the simple form
\begin{equation}
\Ad(S) \colon {\bf e}_a {\bf e}_b \longmapsto S \, {\bf e}_a {\bf e}_b \, S^{-1}, \quad \forall S \in \Spin(r,s)
\end{equation}
where the products are in $\Cl(V,\eta)$.

Let us now return to the case of lorentzian signature $(n,1)$. In $V = \R^{n+1}$ it is customary to denote the $\eta$-orthogonal basis elements by $e_0, e_1, \dots, e_n$, that is
\begin{equation}
\begin{aligned}
&\eta(e_0, e_0) = -1
	\\
&\eta(e_i, e_j) = \delta_{ij}, \quad i,j = 1, \dots, n
	\\
&\eta(e_0, e_i) = 0, \quad i = 1,\dots, n
\end{aligned}
\end{equation}
Define $W = \R^n$ as the span of the last $n$-vectors $e_1, \dots, e_n$ and denote by $\delta$ the restricted metric, which is euclidean. We have an isometric embedding $\iota \colon (W,\delta) \rightarrow (V,\eta)$. From here onwards, latin indices from the beginning of the alphabet (e.g.\ $a,b,c$) will range from $0$ to $n$ while latin indices from the middle of the alphabeth (e.g.\ $i,j,k$) will range from $1$ to $n$.

By functoriality of Clifford algebras (see \cite{Lawson}) we have an injection $\Spin(n) \hookrightarrow \Spin(n,1)$ which is characterised by
\begin{equation}
\Ad(S) {\bf e}_0 = {\bf e}_0, \quad \forall S \in \Spin(n)
\end{equation}

Similarly, we have an injection $\spin(n) \hookrightarrow \spin(n,1)$ through which the generators ${\bf e}_i {\bf e}_j$ are mapped to themselves. Therefore the basis for $\spin(n)$ can be completed to a basis for $\spin(n,1)$ by adding the vectors $\left\{ {\bf e}_0 {\bf e}_k \right\}_{k = 1, \dots, n}$.

We can now prove \hyperref[th4]{Theorem 4}.

\begin{proof}
Let $U:={\spin(n,1)} /{\spin(n)}$. There is a short exact sequence
\begin{equation}
\begindc{\commdiag}[10]
\obj(-30,30)[o1]{$0$}
\obj(20,30)[h]{$\spin(n)$}
\obj(80,30)[g]{$\spin(n,1)$}
\obj(140,30)[m]{$U$}
\obj(180,30)[o2]{$0$}

\mor{o1}{h}{}
\mor{h}{g}{}
\mor{g}{m}{}
\mor{m}{o2}{}
\enddc
\end{equation}
Identifying $U$ with $W$ via the isomorphism $ [{\bf e} _0 {\bf e}_k] :={\bf e} _0 {\bf e}_k + \spin (n) \mapsto e_k$, a splitting   corresponds to a linear injection $\phi \colon W \hookrightarrow \spin( n,1 ) $ such that $\phi (W)  \oplus  \spin (n) =\spin(n+1) $.
The most general choice of $\phi$  is 
\begin{equation}
   \phi({e}_k) = {\bf e}_0 {\bf e}_k + \psi (e_k )
\end{equation} 
for some linear map $\psi \colon W \rightarrow \spin(n)$.  Setting $\gotm := \phi(W)$, we obtain the splitting 
\begin{equation}
\spin(n,1) = \spin(n) \oplus \gotm.
\end{equation}
We now determine under which conditions the vector space $\gotm$ is $\Ad(\Spin(n))$-invariant. The space $\gotm$ is spanned by vectors of the form ${\bf e} _0 {\bf e} _k  + \psi (e_k ) $. For any $S \in \Spin(n)$ the element
\begin{equation}
\begin{aligned}
\Ad(S)({\bf e} _0 {\bf e} _k  + \psi ({e} _k )) 
&=S {\bf e } _0 S^{-1} \, S {\bf e} _k S^{-1} + (\Ad (S) \circ \psi) ({e} _k )
	\\
&={\bf e } _0 \, \Ad (S) ({\bf e} _k ) + (\Ad (S) \circ \psi) ({e} _k )
\end{aligned}
\end{equation}
belongs to $\gotm $ if and only if 
\begin{equation}
(\Ad (S) \circ \psi )({e} _k ) = (\psi \circ \Ad (S) )({\bf e } _k ).
\end{equation}
Since $\Ad (S) ({\bf e } _k )  = \ell (S) ({ e } _k ) $,
the splitting is reductive if and only if
$\Ad(S) \circ \psi = \psi \circ \ell(S)$,
that is if $\psi$ is an intertwiner between the $\Ad(S) \in \End(\spin(n))$ and $\ell(S) \in \End(W)$ representations of $\Spin(n)$.

If $n \neq 4$ the group $\SO(n) $ acts irreducibly on $W$. The adjoint representation of $\spin(n) $ on itself is irreducible since $\soC(n) =\spin (n) $ is a simple Lie algebra, hence the adjoint representation of $\Spin (n) $ on $\spin (n) $ is also irreducible. Given that both $\Ad$ and $\ell$ are irreducible representations of $\Spin(n)$, by Schur's Lemma $\psi$ is either the null map or an isomorphism. In the latter case we must have $n = \dim W = \dim( \spin(n) )= n(n-1)/2$
which is possible only if  $n = 0$ or $n = 3$.

For $n =3 $ we  identify $W $ with $  \spin (3) \simeq \su(2)$ via the map  $ {e} _k  \mapsto \tau_k$, where $\left\{\tau_k\right\}_{k = 1,2,3}$ are the Pauli matrices multiplied by the immaginary unit $i$. Then $\psi$ becomes an equivariant map with respect to the adjoint representation of $\Spin (3) $ on $\spin (3) $. Since any endomorphism of an odd dimensional real vector space has a real eigenvalue, it follows by Schur's lemma that $\psi$ is a constant multiple of the identity, i.e.\
\begin{equation}
\psi({e}_k) = \BIp \, \tau_k, \quad \BIp \in \R
\end{equation}

For $n = 4$ we use the fact that $\Spin(4) \simeq \SU(2) \times \SU(2)$ and $\spin(4) \simeq \su(2) \oplus \su(2)$. Each of the two copies of $\su(2)$ is a $3$-dimensional invariant subspace for the adjoint action of $\Spin(4)$ on $\spin(4)$. On the other hand, as shown in \cite{Lawson}, the fundamental representation $\ell$ has two invariant subspaces which are both $2$-dimensional. 

Since $\ker \psi$ is an invariant subspace for $\ell$, its dimension must be $0, 2$ or $4$. By the rank-nullity theorem $\dim W = \dim (\ker \psi) + \dim (\im \psi)$, so that $\dim (\im \psi)$ is $4, 2$ or $0$. However $\im \psi$ is an invariant subspace for $\Ad$, so that its dimension is $0, 3$ or $6$. The only possibiity therefore is $\dim(\ker \psi) = 4$ which implies that $\psi = 0$. 

The proof is thus complete.
\end{proof}

\begin{corollary}\label{cor1}
For $n > 3$ we have the unique reductive splitting $\spin(n,1) = \spin(n) \oplus \gotm_0$ with
\begin{equation}
\gotm_0 = \langle {\bf e}_0 {\bf e}_k : k = 1, \dots, n \rangle
\end{equation}

For $n = 3$ we have a one-parameter family of reductive splittings $\spin(3,1) = \su(2) \oplus \gotm_\BIp$ with
\begin{equation}
\gotm_\BIp = \langle \sigma_k + \BIp \tau_k : k = 1,2,3\rangle, \quad \BIp \in \R
\end{equation}
where $\left\{\sigma_k\right\}_{k=1,2,3}$ are the Pauli matrices and $\tau_k = i \sigma_k$.
\end{corollary}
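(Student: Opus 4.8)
The plan is to extract the explicit subspaces directly from the proof of \hyperref[th4]{Theorem 4}. There the reductive splittings were parametrized by the intertwiner $\psi \colon W \to \spin(n)$ via $\phi(e_k) = {\bf e}_0 {\bf e}_k + \psi(e_k)$ and $\gotm = \phi(W)$, so the corollary is nothing but the concrete description of $\gotm$ once $\psi$ is known. No new structural input is needed; the content is purely a translation into an explicit basis.

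For $n > 3$ I would simply invoke the conclusion of \hyperref[th4]{Theorem 4}: both the Schur-lemma argument (for $n \neq 4$) and the representation-counting argument (for $n = 4$) force $\psi = 0$. Hence $\phi(e_k) = {\bf e}_0 {\bf e}_k$ and the complementary subspace is $\gotm_0 = \langle {\bf e}_0 {\bf e}_k : k = 1, \dots, n \rangle$; uniqueness of the splitting is uniqueness of the null intertwiner. This half requires no computation.

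For $n = 3$, \hyperref[th4]{Theorem 4} gives $\psi(e_k) = \BIp \tau_k$, hence $\gotm_\BIp = \langle {\bf e}_0 {\bf e}_k + \BIp \tau_k : k = 1,2,3 \rangle$. The remaining task is to pass to the $2$-dimensional spinor representation in which $\spin(3,1) \cong \mathfrak{sl}(2,\mathbb{C})$. I would fix an explicit representation of $\Cl(3,1)$ with ${\bf e}_0^2 = -1$ and ${\bf e}_i^2 = 1$ and compute the bivectors. The spatial bivectors realize the compact rotation subalgebra $\spin(3)$, and under the identification $e_k \mapsto \tau_k$ already used in \hyperref[th4]{Theorem 4} they are the anti-Hermitian matrices $\tau_k = \tfrac{1}{2}\epsilon_k{}^{ij} {\bf e}_i {\bf e}_j = i\sigma_k$; the mixed bivectors realize the noncompact boost directions and are the Hermitian Pauli matrices, ${\bf e}_0 {\bf e}_k = \sigma_k$. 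Substituting these into the expression for $\gotm_\BIp$ yields the stated form $\langle \sigma_k + \BIp \tau_k : k = 1,2,3 \rangle$.

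The only genuine obstacle is the bookkeeping in this last identification: one must choose normalizations and signs so that the two identifications are mutually consistent, i.e.\ so that ${\bf e}_0 {\bf e}_k = \sigma_k$ together with $\tau_k = \tfrac{1}{2}\epsilon_k{}^{ij} {\bf e}_i {\bf e}_j$ reproduces $\tau_k = i \sigma_k$ and the Pauli commutation relations $[\sigma_i, \sigma_j] = 2i\epsilon_{ij}{}^k \sigma_k$, $[\tau_i, \tau_j] = -2\epsilon_{ij}{}^k \tau_k$. A short check using ${\bf e}_2 {\bf e}_3 = ({\bf e}_0 {\bf e}_2)({\bf e}_0 {\bf e}_3) = \sigma_2 \sigma_3 = i\sigma_1$ confirms the compatibility, and the corollary follows.
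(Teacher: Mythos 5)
Your proposal is correct and takes essentially the same route as the paper: the corollary is read off directly from the proof of Theorem~\ref{th4} (where $\gotm=\langle {\bf e}_0{\bf e}_k+\psi(e_k)\rangle$ with $\psi=0$ for $n>3$ and $\psi(e_k)=\BIp\,\tau_k$ for $n=3$), combined with the identification of the bivectors with Pauli matrices, which the paper carries out immediately afterwards via $T_{0k}=-\frac{1}{4}\sigma_k$ and $T_{ij}=\frac{1}{4}\epsilon_{ij}{}^{k}\tau_k$. The only discrepancy is an inessential sign/normalization in your assignment ${\bf e}_0{\bf e}_k=\sigma_k$ versus the paper's convention, which merely rescales the label $\BIp$ and leaves the one-parameter family of subspaces $\gotm_\BIp$ unchanged.
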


\

We close this section by recovering the expression for the coefficient of a BI connection $A$ in the case $n = 3$. Choose a basis $\left\{ e_a \right\}_{a = 0,\dots 3}$ in $\R^4$ and define
\begin{equation}
T_{ab} = \Frac[1/2] e_a \wedge e_b
\end{equation}
Since $\Lambda^2 \R^4 \simeq \soC(3,1) \simeq \spin(3,1)$ (see \cite{Lawson} for details), the bivectors $\left\{ T_{ab} \right\}_{a < b}$ form a basis for the spin algebra $\spin(3,1)$. For any vector $v \in \R^4$ we have that $T_{ab}$ acts as
\begin{equation}
T_{ab}(v) = \Frac[1/2] (\eta(e_a, v) e_b - \eta(e_b, v) e_a)
\end{equation}
By looking at all commutators $[T_{ab}, T_{cd}]$ one finds that
\begin{equation}
T_{0k} = -\Frac[1/4] \sigma_k \quad \mbox{and} \quad T_{ij} = \Frac[1/4] \epsilon_{ij}{}^k \tau_k
\end{equation}

Let us now return to the $\Spin(3,1)$-bundle $p \colon P \rightarrow M$ with a $\SU(2)$-reduction $q \colon Q \rightarrow M$. 
\begin{equation}
\begindc{\commdiag}[10]
\obj(30,30)[M1]{$M$}
\obj(100,30)[M2]{$M$}
\obj(30,80)[Q]{$Q$}
\obj(100,80)[P]{$P$}

\mor{M1}{M2}{}[\atleft, \solidline] \mor(30,33)(100,33){}[\atleft, \solidline]
\mor{Q}{M1}{$q$}
\mor{P}{M2}{$p$}
\mor{Q}{P}{}[\atleft, \injectionarrow]
\enddc
\end{equation}
Let us also fix a BI parameter $\BIp$ so that we have a reductive splitting $\spin(3,1) = \su(2) \oplus \gotm_\BIp$. A connection $1$-form $\omega$ on $P$ can be written as
\begin{equation}
\omega = \omega^{ab} \otimes T_{ab}, \quad \mbox{with } \omega^{ab} \in \Omega^1(P)
\end{equation}
Following \hyperref[cor1]{Corollary 1}, the vectors $\left\{ -\Frac[1/2](\sigma_k + \BIp \tau_k) \right\}_{k=1,2,3}$ form a basis for the space $\gotm_\BIp$. Therefore we can split $\omega$ into $\su(2)$ and $\gotm_\BIp$ components
\begin{equation}
\begin{aligned}
\omega &= 2 \omega^{0k} \otimes T_{0k} + \omega^{ij} \otimes T_{ij} = 
	\\
&= 2\omega^{0k} \otimes \left(- \Frac[1/4] \sigma_k \right) + \omega^{ij} \otimes \left( \Frac[1/4] \epsilon_{ij}{}^k \tau_k \right) = 
	\\
&= \omega^{0k} \otimes \left( -\Frac[1/2](\sigma_k + \BIp \tau_k) \right) + \left( \Frac[1/2] \epsilon_{ij}{}^k \omega^{ij} + \BIp \omega^{0k} \right) \otimes \left( \Frac[1/2] \tau_k \right)
\end{aligned}
\end{equation}

By choosing $\left\{ \Frac[1/2] \tau_k \right\}_{k = 1,2,3}$ as a basis for $\su(2)$, we get that the coefficients of $A$ and $K$ with BI parameter $\BIp$ are
\begin{equation}
\begin{cases}
A^k = \Frac[1/2] \, \epsilon_{ij}{}^k \, \omega^{ij} + \BIp \, \omega^{0k}
	\\
K^k = \omega^{0k}	
\end{cases}
\end{equation}
We thus find that our construction generalizes the definition of BI connections to a generic $(n+1)$-dimensional lorentzian manifold $M$, for $n \geq 3$. The BI parameter $\BIp$ comes out naturally in the case $n = 3$ whereas for $n > 3$ it is absent.

\section{Conclusions and Perspectives}

In this work we gave a well-defined geometric framework for building BI connections on any given $(n+1)$-dimensional lorentzian spin manifold $M$. The construction generalises what is found in the physics and mathematical physics literature and improves it by not requiring a fixed metric $g$ on $M$ and producing BI connections on the whole spacetime manifold $M$. The much discussed BI parameter $\BIp$ comes out naturally and we proved that it is a feature {\it unique} to the $4$-dimensional case.

As stated in the introduction, the main motivation behind this work was to reformulate General Relativity: instead of using a metric $g$ and a spin connection $\omega$ as field variables, we rewrite the lagrangian in terms of a spin frame $e$ and the pair $(A,K)$ for a fixed BI parameter $\BIp$. Since this formulation is the classical starting point of LQG, it is worthwhile to study the variational Euler--Lagrange equations for $(e,A,K)$ and their relation to the Einstein field equations. The equations dependence on the BI parameter $\BIp$ is also of interest, especially when we couple General Relativity with other fields (e.g.\ Klein--Gordon, Yang--Mills, Dirac).

Another application of this work is the investigation of the holonomy group of a BI connection $A$, which plays a central role in LQG. Samuel's argument in \cite{Samuel} can be solved by looking at the relation between the holonomy group of $A$ and that of the original spin connection $\omega$, both on $M$ and on a spacelike submanifold $S$. One should also refer to the classification of metric lorentzian holonomies by Berger \cite{HolClass} and Leistner \cite{Leistner} to better characterize the BI connections, this was suggested to one of the authors in private form.

\section*{Acknowledgements}

This article is based upon work from COST Action (CA15117 CANTATA), supported by COST (European Cooperation in Science and Technology).

We also acknowledge the contribution of INFN (Iniziativa Specifica QGSKY), the local research project {\it  Metodi Geometrici in Fisica Matematica e Applicazioni (2019)} of Dipartimento di Matematica of University of Torino (Italy). This paper is also supported by INdAM-GNFM.

We would like to deeply thank Giovanni Russo, for reading the initial draft and giving invaluable advice on how to improve the presentation, and Anna Fino, for useful comments and discussions.



%
%



\end{document}